\documentclass[10pt,a4paper]{amsart}
\usepackage[utf8]{inputenc}
\usepackage{palatino, amsmath, amssymb}
\usepackage[colorlinks=false, breaklinks=true]{hyperref}
\newtheorem{theorem}{$\quad$Theorem}[section]
\newtheorem{lemma}[theorem]{$\quad$Lemma}

\title[About categorification of cyclotomic integers and tensored $N$-complexes]{About categorification of cyclotomic integers and \\ tensored \texorpdfstring{$N$}{N}-complexes}
\author{Djalal Mirmohades}

\begin{document}

\begin{abstract}
 We prove that the ideal used in recent works to categorify the cyclotomic integers is generated by a cyclotomic polynomial.
 Moreover, we publish a proof by T. Ekedahl that the $q$-binomial relations used in the tensor product of $N$-com\-plexes makes it necessary for the category to be enriched over the cyclotomic integers.
\end{abstract}

\maketitle

\section{Acknowledgements}

It was during my master's thesis that I started to investigate the question posed as Theorem \ref{th_principal}.
I could not prove it, nor find any counterexamples.
Finally, the problem reached Ekedahl in October 2010 who swiftly replied by mail with the proof published here.

About four years later at a conference in Montreal, Khovanov asked if someone could work on categorification of the ring of cyclotomic integers.

Motivated by Ekedahls proof, I started to look for a suitable ideal in a some product category of $N$-complexes so that I could apply the methods in the proof by Ekedahl to prove that the decategorified ideal is generated by the cyclotomic polynomial. The resulting paper \cite{mir} however only dealt with two distinct primes, and a simple calculation was sufficient to show that the ideal is principal.

However, without Ekedahls solution, I would most likely not have worked on \cite{mir}, and I would certainly not have written this paper. So I would like to acknowledge my gratitude to Torsten Ekedahl.

\section{About categorification of cyclotomic integers}

Recently, Laugwitz \& Qi in \cite{lq} constructed a monoidal category toghether with a thick ideal such that in the ring ${\mathbb Z}[q, q^{-1}]$, this ideal is generated by certain elements of the form $(q^m - 1)/(q^{m/p_k} - 1), \: p_k{|}m$.
Here, we show that this ideal is generated by $\Phi_m(q)$.
This result generalizes the case in \cite[Last step of main Theorem]{mir} where $m$ is equal to a product of two primes.
Note that we can do the calculations in ${\mathbb Z}[q]$ instead of ${\mathbb Z}[q, q^{-1}]$ because in the quotient ring we have $q^m = 1$, hence $q^{-1} = q^{m-1}$.

We refer to \cite{lq, mir} and the references within them for a more detailed introduction to the subject.

\begin{lemma}\label{lemma_maxprime}
Any maximal ideal of ${\mathbb Z}[x]$ contains a prime number.
\end{lemma}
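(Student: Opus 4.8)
The plan is to pass to the quotient. Let $M$ be a maximal ideal of $\mathbb{Z}[x]$ and set $K = \mathbb{Z}[x]/M$, which is a field; write $\alpha$ for the image of $x$, so that $K = \mathbb{Z}[\alpha]$. The restriction of the quotient map to the constants, $\mathbb{Z} \to K$, has kernel $M \cap \mathbb{Z}$, which is a prime ideal of $\mathbb{Z}$ and is therefore either $(p)$ for some prime number $p$ or $(0)$. In the first case $p \in M$ and we are done, so the whole task is to exclude the second case.

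Suppose, for contradiction, that $M \cap \mathbb{Z} = (0)$. Then $\mathbb{Z}$ embeds in $K$, hence $K$ has characteristic $0$ and contains $\mathbb{Q}$. I would first observe that $\alpha$ must be algebraic over $\mathbb{Q}$: otherwise the evaluation map $\mathbb{Z}[x] \to K$ is injective, forcing $M = (0)$, which is impossible since the chain $(0) \subsetneq (2) \subsetneq \mathbb{Z}[x]$ shows $(0)$ is not maximal. Consequently $K = \mathbb{Z}[\alpha] = \mathbb{Q}(\alpha)$ is a number field, and in particular $\mathbb{Q} \subseteq \mathbb{Z}[\alpha]$.

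The heart of the argument is a denominator estimate. Choose a primitive polynomial $g(x) = a_d x^d + \cdots + a_0 \in \mathbb{Z}[x]$ of least degree with $g(\alpha) = 0$; its degree $d$ equals $[\mathbb{Q}(\alpha):\mathbb{Q}]$, so that $1, \alpha, \dots, \alpha^{d-1}$ form a $\mathbb{Q}$-basis of $K$. From $\alpha^d = -a_d^{-1}(a_{d-1}\alpha^{d-1} + \cdots + a_0)$ one checks by induction that every power of $\alpha$, and hence every element of $\mathbb{Z}[\alpha]$, lies in the $\mathbb{Z}[1/a_d]$-span of $1, \alpha, \dots, \alpha^{d-1}$ (this span is visibly closed under multiplication by $\alpha$, so it is a ring containing $\mathbb{Z}[\alpha]$). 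Reading off coordinates in the above basis, this means that the denominators occurring in any element of $\mathbb{Z}[\alpha]$ involve only primes dividing $a_d$. Picking any prime number $q$ with $q \nmid a_d$ then yields $1/q \notin \mathbb{Z}[\alpha]$, contradicting $\mathbb{Q} \subseteq \mathbb{Z}[\alpha]$. This contradiction rules out $M \cap \mathbb{Z} = (0)$ and completes the proof.

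I expect the main obstacle to be the denominator estimate of the last paragraph, namely making precise that reducing high powers of $\alpha$ ever only introduces the single set of primes dividing $a_d$; this rests on the minimality of $g$ (which pins down the degree $d$) and on the $\mathbb{Q}$-linear independence of $1, \alpha, \dots, \alpha^{d-1}$, so that the coordinates are unambiguous. An alternative, less hands-on route would be to invoke the theorem that a field which is finitely generated as a $\mathbb{Z}$-algebra is finite (a form of the Nullstellensatz over $\mathbb{Z}$): since $K = \mathbb{Z}[\alpha]$ is such a field it would be finite, hence of positive characteristic, again giving $M \cap \mathbb{Z} \neq (0)$. I would nonetheless prefer the elementary denominator argument, as it keeps the proof self-contained.
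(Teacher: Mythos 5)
Your proof is correct, but it takes a genuinely different route from the paper. The paper stays at the level of ideals: assuming $M \cap \mathbb{Z} = (0)$, it extends $M$ to an ideal $(f)$ of $\mathbb{Q}[x]$ with $f \in \mathbb{Z}[x]$ primitive, uses Gauss's lemma to conclude $M \subseteq (f)$ already in $\mathbb{Z}[x]$, and then shows $(f)$ is not maximal by choosing an integer $m$ with $f(m) \neq 0, \pm 1$ and exhibiting the kernel of evaluation $\mathbb{Z}[x] \to \mathbb{Z}/(f(m))$ as an intermediate proper ideal. You instead pass to the residue field $K = \mathbb{Z}[\alpha]$, note that in characteristic $0$ it would have to be a number field containing $\mathbb{Q}$, and derive a contradiction from the denominator bound $\mathbb{Z}[\alpha] \subseteq \mathbb{Z}[1/a_d]\text{-span of } 1, \alpha, \dots, \alpha^{d-1}$, since $1/q$ for a prime $q \nmid a_d$ cannot lie there. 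Both arguments are elementary and of comparable length; the paper's buys you a slightly stronger intermediate statement (any ideal with $I \cap \mathbb{Z} = (0)$ is contained in a non-maximal principal ideal, with Gauss's lemma doing the work), while yours is essentially the one-variable case of the Artin--Tate/Zariski-style argument that a field finitely generated as a $\mathbb{Z}$-algebra cannot have characteristic $0$ -- a viewpoint that generalizes more readily, as you note in your closing remark about the Nullstellensatz over $\mathbb{Z}$. The key steps you flag as delicate (minimality of $d$ giving $\mathbb{Q}$-linear independence of $1, \alpha, \dots, \alpha^{d-1}$, hence well-defined coordinates) are exactly the right ones and go through without difficulty.
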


\begin{proof}[Proof (unknown origin)] 
Let $I$ be an ideal of ${\mathbb Z}[x]$. If $I \cap {\mathbb Z} = (0)$, we need to show that $I$ cannot be maximal. Let $I'$ be the ideal of ${\mathbb Q}[x]$ generated by $I$. We have $I' = (f(x))$ for a polynomial $f$ coming from ${\mathbb Z}[x]$ of content $1$. The polynomial $f$ has degree $> 0$, because if $1$ could be written as a linear combination of polynomials from $I$ in ${\mathbb Q}[x]$, then $I \cap {\mathbb Z} \neq (0)$. But any $h \in I$ is of the form $gf$ for some $g \in {\mathbb Z}[x]$. It follows from Gauss lemma that $g \in {\mathbb Z}[x]$. This shows that $I \subseteq (f(x))$ in ${\mathbb Z}[x]$. To see that $(f(x))$ is not maximal in ${\mathbb Z}[x]$, pick an $m \in {\mathbb Z}$ such that $f(m) \neq 0, \pm 1$. Evaluation at $m$ then induces a map $${\mathbb Z}[x] \longrightarrow {\mathbb Z}/(f(m))$$ which has kernel strictly between $(f(x))$ and ${\mathbb Z}[x]$. Hence, any maximal ideal $I$ of ${\mathbb Z}[x]$ has $I \cap {\mathbb Z} = (n)$ for some integer $n > 0$. But since $n$ is the characteristic of the field ${\mathbb Z}[x]/I$, it must be prime.
\end{proof}

\begin{lemma}\label{lemma_ekedahl_method}
An ideal $I$ of ${\mathbb Z}[x]$ is the unit ideal if and only if, for every prime $p$, the canonical homomorphism ${\mathbb Z}[x] \to {\mathbb Z}/(p)[x]$ maps $I$ to the unit ideal in ${\mathbb Z}/(p)[x]$.
\end{lemma}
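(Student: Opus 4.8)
The plan is to prove the contrapositive of the nontrivial direction, using Lemma~\ref{lemma_maxprime} as the essential input. The forward implication is immediate: if $I = {\mathbb Z}[x]$ then $1 \in I$, and since the canonical surjection $\pi_p \colon {\mathbb Z}[x] \to {\mathbb Z}/(p)[x]$ sends $1$ to $1$, the image $\pi_p(I)$ contains the unit of ${\mathbb Z}/(p)[x]$ and is therefore the unit ideal for every prime $p$. So all the work is in the converse, and I would phrase it as: if $I$ is a \emph{proper} ideal, then there exists at least one prime $p$ for which $\pi_p(I)$ fails to be the unit ideal.

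First I would invoke the standard fact that in a commutative ring with unit every proper ideal is contained in a maximal ideal (Zorn's lemma); thus there is a maximal ideal $\mathfrak{m} \supseteq I$. Next I would apply Lemma~\ref{lemma_maxprime} to $\mathfrak{m}$ to extract a prime number $p \in \mathfrak{m}$. This is the crux of the argument: the maximal ideal is guaranteed to see a genuine prime of ${\mathbb Z}$, which is exactly the prime whose reduction map will witness the failure. In particular $(p) \subseteq \mathfrak{m}$, so the kernel of $\pi_p$, which is precisely $(p)$, lies inside $\mathfrak{m}$.

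It then remains to push everything through $\pi_p$. Since $\pi_p$ is surjective with $\ker \pi_p = (p) \subseteq \mathfrak{m}$, the correspondence between ideals of ${\mathbb Z}/(p)[x]$ and ideals of ${\mathbb Z}[x]$ containing $(p)$ sends $\mathfrak{m}$ to the proper (indeed maximal) ideal $\mathfrak{m}/(p)$ of ${\mathbb Z}/(p)[x]$. Because $I \subseteq \mathfrak{m}$, the image $\pi_p(I)$ is contained in $\pi_p(\mathfrak{m}) = \mathfrak{m}/(p) \neq {\mathbb Z}/(p)[x]$, so $\pi_p(I)$ is not the unit ideal. This contradicts the hypothesis that $\pi_p(I)$ is the unit ideal for every prime, completing the contrapositive.

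The one point I would take care to justify cleanly, and which I expect to be the only genuine obstacle, is that $\pi_p(\mathfrak{m})$ is \emph{proper}; this is not automatic for images of ideals under ring maps, but it holds here precisely because the kernel $(p)$ is contained in $\mathfrak{m}$, so that $\pi_p^{-1}(\pi_p(\mathfrak{m})) = \mathfrak{m} \neq {\mathbb Z}[x]$ and hence $\pi_p(\mathfrak{m})$ cannot contain $1$. Everything else is formal, and the real content of the lemma is concentrated in the appeal to Lemma~\ref{lemma_maxprime}.
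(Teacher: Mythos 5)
Your proposal is correct and follows the paper's own argument exactly: the forward direction is trivial, and for the converse one embeds a proper $I$ in a maximal ideal, extracts a prime $p$ from it via Lemma~\ref{lemma_maxprime}, and observes that the image in ${\mathbb Z}/(p)[x]$ stays proper. Your extra care in justifying that $\pi_p(\mathfrak{m})$ is proper (because $\ker\pi_p=(p)\subseteq\mathfrak{m}$) is a detail the paper leaves implicit, but the route is the same.
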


\begin{proof}
The canonical homomorphisms ${\mathbb Z}[x] \to {\mathbb Z}/(p)[x]$ maps the unit ideal to a unit ideal.
So assume $I$ is not the unit ideal of ${\mathbb Z}[x]$, then by Zorn's lemma $I$ lies in a maximal ideal $M$, and by Lemma \ref{lemma_maxprime}, there is a prime $p \in M$. But then, the image of $M$ in ${\mathbb Z}/(p)[x]$ cannot be the unit ideal.
\end{proof}

\begin{lemma}\label{lemma_gcd}
For a prime $p$, and distinct positive integers $n$ and $m$ not divisible by $p$, we have 
$\mathrm{gcd}\!\left\{\Phi_n (q),  \: \Phi_m (q) \right\} = 1$
in the principal ideal domain ${\mathbb Z}/(p)[q]$.
\end{lemma}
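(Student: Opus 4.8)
The plan is to rule out a common root in an algebraic closure $\overline{{\mathbb F}_p}$. Write $\bar\Phi_k \in {\mathbb Z}/(p)[q]$ for the image of $\Phi_k$ under the reduction map ${\mathbb Z}[q] \to {\mathbb Z}/(p)[q]$. Since ${\mathbb Z}/(p)[q]$ is a principal ideal domain, the assertion $\mathrm{gcd}\!\left\{\bar\Phi_n, \bar\Phi_m\right\} = 1$ is equivalent to saying that $\bar\Phi_n$ and $\bar\Phi_m$ have no common root in $\overline{{\mathbb F}_p}$, and it is this root-free statement that I would aim for.

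First I would pass to $N = \operatorname{lcm}(n,m)$. Because $p$ divides neither $n$ nor $m$, it does not divide $N$ either, while both $n$ and $m$ do divide $N$. The cornerstone is the factorization $q^N - 1 = \prod_{d \mid N} \Phi_d(q)$, which already holds over ${\mathbb Z}$ and therefore persists after reduction, giving $q^N - 1 = \prod_{d \mid N} \bar\Phi_d(q)$ in ${\mathbb Z}/(p)[q]$. Next I would check that $q^N - 1$ is separable over ${\mathbb F}_p$: its formal derivative $N q^{N-1}$ is nonzero since $N$ is invertible modulo $p$, and its only root is $q = 0$, which is not a root of $q^N - 1$; hence $\mathrm{gcd}\!\left\{q^N - 1,\, N q^{N-1}\right\} = 1$ and $q^N - 1$ has no repeated roots in $\overline{{\mathbb F}_p}$.

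Separability provides the key leverage. Since the product $\prod_{d \mid N} \bar\Phi_d$ equals the squarefree polynomial $q^N - 1$, no root can be shared between two distinct factors, for such a root would be a double root of $q^N - 1$. Thus the polynomials $\bar\Phi_d$, $d \mid N$, are pairwise coprime in ${\mathbb Z}/(p)[q]$. As $n \neq m$ while both divide $N$, the factors $\bar\Phi_n$ and $\bar\Phi_m$ are coprime, which is exactly the claim.

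I expect the substantive point — indeed the only place where the hypotheses $p \nmid n$ and $p \nmid m$ are genuinely used — to be the separability of $q^N - 1$; everything after it is a formal consequence of the factorization. It is worth flagging that the coprimality-to-$p$ hypothesis is truly needed and not a technical convenience: when $p \mid m$ one has congruences such as $\Phi_{pk}(q) \equiv \Phi_k(q)^{\,p-1} \pmod p$ for $p \nmid k$, which force $\bar\Phi_{pk}$ and $\bar\Phi_k$ to share roots, so the conclusion fails without the hypothesis.
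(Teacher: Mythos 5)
Your proof is correct and rests on the same core idea as the paper's: $\Phi_n(q)\,\Phi_m(q)$ divides a polynomial that is separable over ${\mathbb Z}/(p)$, so the two cyclotomic factors cannot share a root in $\overline{{\mathbb F}_p}$ and are therefore coprime in the PID ${\mathbb Z}/(p)[q]$. The only difference is in execution: the paper takes the auxiliary separable polynomial to be $q^{p^{\varphi(k)}} - q$ with $k = \mathrm{lcm}(n,m)$, using Euler's theorem to see that $n$ and $m$ divide $p^{\varphi(k)}-1$ and the fact that every element of $\mathrm{GF}_{p^{\varphi(k)}}$ is a simple root, whereas you work directly with $q^N - 1$ for $N = \mathrm{lcm}(n,m)$ and establish separability by the derivative criterion (its derivative $Nq^{N-1}$ is nonzero because $p \nmid N$, and shares no root with $q^N-1$). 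Your route is slightly more self-contained, avoiding the detour through finite-field theory, and your closing remark that the hypothesis is genuinely necessary (via $\Phi_{pk}(q) \equiv \Phi_k(q)^{p-1}$) matches the phenomenon the paper exploits later in the main theorem.
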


\begin{proof}
Let $\varphi$ denote Euler's totient function and $k$ denote the least common multiple of $n$ and $m$.
The integer $p^{\varphi(k)} - 1$ is then divisible by $k$, $n$ and $m$.
Let $\mathrm{GF}_{p^{\varphi(k)}}$ denote a splitting field of $q^{p^{\varphi(k)}} - q$ over ${\mathbb Z}/(p)$. 
Since the polynomial $\Phi_n (q) \Phi_m (q)$ is a divisor of $(q^{p^{\varphi(k)}-1} - 1)q$ in ${\mathbb Z}[q]$, it has only simple roots in $\mathrm{GF}_{p^{\varphi(k)}}$. This proves the lemma.
\end{proof}

\begin{theorem}
Let $n = p_1 p_2 \cdots p_t$, where $p_k$ are distinct primes.
In the ring ${\mathbb Z}[q]$, we have the following equality of ideals
$$
\left(\frac{[n]_q}{[n/p_1]_q}\right) + 
\left(\frac{[n]_q}{[n/p_2]_q}\right) + 
\cdots + 
\left(\frac{[n]_q}{[n/p_t]_q}\right) 
= \big( \Phi_n(q) \big)
$$
where $[m]_q = (q^m-1)/(q-1)$ and $\Phi_n$ denotes the $n$:th cyclotomic polynomial.
\end{theorem}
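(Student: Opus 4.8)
The plan is to reduce the equality of ideals to a single coprimality statement modulo every prime, which can then be settled by examining multiplicative orders of roots of unity in characteristic $p$. First I would rewrite each generator. Since $n$ is squarefree and $q^n-1=\prod_{d\mid n}\Phi_d(q)$, the quotient
$$
\frac{[n]_q}{[n/p_k]_q}=\frac{q^n-1}{q^{n/p_k}-1}=\prod_{\substack{d\mid n\\ p_k\mid d}}\Phi_d(q)
$$
is an honest polynomial in ${\mathbb Z}[q]$, and it is divisible by $\Phi_n(q)$ because $p_k\mid n$. Setting $g_k(q)=\prod_{d\mid n,\ p_k\mid d,\ d\neq n}\Phi_d(q)$, each generator equals $\Phi_n(q)\,g_k(q)$. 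As $\Phi_n$ is a single element factored out of every generator, the left-hand side is $\Phi_n(q)\cdot\big(g_1(q),\dots,g_t(q)\big)$. This makes the inclusion $\subseteq(\Phi_n)$ immediate and reduces the theorem to the single identity $(g_1,\dots,g_t)=(1)$ in ${\mathbb Z}[q]$.

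Next I would invoke Lemma \ref{lemma_ekedahl_method}: it suffices to show that for every prime $p$ the reductions $\bar g_1,\dots,\bar g_t$ generate the unit ideal of the principal ideal domain ${\mathbb Z}/(p)[q]$, equivalently that $\mathrm{gcd}(\bar g_1,\dots,\bar g_t)=1$, equivalently that the $\bar g_k$ have no common root in an algebraic closure of ${\mathbb Z}/(p)$. To read off roots I would use two facts about cyclotomic polynomials mod $p$: if $p\nmid e$ then $\bar\Phi_e$ is separable with root set exactly the elements of multiplicative order $e$ (the separability is the content behind Lemma \ref{lemma_gcd}), and if $d=p^a e$ with $p\nmid e$ then $\bar\Phi_d=\bar\Phi_e^{\varphi(p^a)}$, so $\bar\Phi_d$ has the same roots as $\bar\Phi_e$. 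Hence $\alpha$ is a root of $\bar\Phi_d$ precisely when the order of $\alpha$ equals the prime-to-$p$ part of $d$.

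Then I would argue by contradiction. Suppose $\alpha$ is a common root of all $\bar g_k$ and let $r$ be its multiplicative order, so $p\nmid r$. For each $k$ there is a divisor $d_k\mid n$ with $p_k\mid d_k$, $d_k\neq n$, and $r$ equal to the prime-to-$p$ part of $d_k$; in particular $r\mid n$. For every index $k$ with $p_k\neq p$, the prime $p_k$ survives into the prime-to-$p$ part of $d_k$, so $p_k\mid r$. If $p\nmid n$ this forces $n\mid r$, whence $r=n$ and then $d_k=n$, a contradiction. If $p=p_j$ for some $j$, the same reasoning over the indices $k\neq j$ gives $(n/p_j)\mid r$, while $r\mid n$ together with $p\nmid r$ gives $r\mid n/p_j$; thus $r=n/p_j$, and at the index $j$ the requirement that the prime-to-$p$ part of $d_j$ equal $r=n/p_j$ together with $p_j\mid d_j$ forces $d_j=n$, again contradicting $d_j\neq n$.

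I expect the main obstacle to be exactly the case $p\mid n$, where the relevant cyclotomic polynomials collapse to proper powers modulo $p$ and one can no longer reason with the polynomials themselves. The delicate point is the bookkeeping of multiplicative orders and the verification that a hypothetical common root is forced to have the full order $n/p$, so that the one excluded divisor $d=n$ becomes the only possibility and produces the contradiction. By contrast, the reductions in the first two paragraphs are formal, and Lemmas \ref{lemma_ekedahl_method} and \ref{lemma_gcd} carry the weight of converting the global statement into this per-prime computation on roots.
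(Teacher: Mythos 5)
Your proof is correct, and its skeleton coincides with the paper's: factor each generator as $\prod_{d\mid n,\ p_k\mid d}\Phi_d(q)$, pull out the common factor $\Phi_n(q)$, invoke Lemma \ref{lemma_ekedahl_method} to reduce to ${\mathbb Z}/(p)[q]$, and compute a gcd in that PID. Where you diverge is the endgame. The paper keeps $\Phi_n$ inside the generators and verifies that their gcd mod $p$ is exactly $\Phi_n(q)$; in the case $p\mid n$ this forces a multiplicity count, the delicate point being that the minimum multiplicity of $\Phi_{n/p}$ over the generators is $p-1$, which must then be matched against the multiplicity of $\Phi_{n/p}$ in $\Phi_n$ mod $p$. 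You instead divide out $\Phi_n$ first, so that only the coprimality $\gcd(\bar g_1,\dots,\bar g_t)=1$ remains, and you settle that set-theoretically: a hypothetical common root has multiplicative order $r$ equal to the prime-to-$p$ part of some divisor $d_k\mid n$ with $p_k\mid d_k$ and $d_k\neq n$, and the order is then pinned down to $n$ (resp.\ $n/p$), forcing $d_k=n$ and a contradiction. Both arguments rest on the same two inputs (Lemma \ref{lemma_gcd} and the collapse $\Phi_{p^a e}=\Phi_e^{\varphi(p^a)}$ mod $p$), but your root-and-order formulation avoids multiplicity bookkeeping entirely, so the $p\mid n$ case is no harder than the $p\nmid n$ case; the paper's version has the mild advantage of exhibiting the gcd of the original generators explicitly. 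Your chain of deductions in the contradiction step (each $p_k\neq p$ divides $r$, hence $n\mid r$ or $n/p\mid r$, combined with $r\mid n$ and $p\nmid r$) is complete and uses the squarefreeness of $n$ exactly where needed.
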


\begin{proof}
Recall that $q^n - 1 = \prod_{d{|}n} \Phi_d(q)$, hence 
$$[n]_q\big/[n/p_k]_q = (q^n-1)\big/(q^{n/p_k}-1) = \prod_{\substack{d{|}n,\\p_k{|}d}} \Phi_d(q).$$
Since $\Phi_n(q)$ divides every generator $[n]_q\big/[n/p_k]_q$, it is equivalent to show that the polynomials $[n]_q\big/[n/p_k]_q\Phi_n(q), 1 \leq k \leq t$ generate the unit ideal in ${\mathbb Z}[q]$.
We use Lemma \ref{lemma_ekedahl_method}, so we need to show this in ${\mathbb Z}/(p)[q]$ for an arbitrary prime $p$. 
Since ${\mathbb Z}/(p)[q]$ is a PID, it is enough to show that $\mathrm{gcd}([n]_q\big/[n/p_1]_q, \cdots, [n]_q\big/[n/p_t]_q) = \Phi_n(q)$.

In the case $p {\not|\:} n$, the minimum multiplicity of $\Phi_m(q)$ in $[n]_q\big/[n/p_k]_q$ over $1 \leq k \leq t$ is equal to $1$ if $m = n$ and equal to $0$ otherwise.
It then follows from Lemma \ref{lemma_gcd} that the $\mathrm{gcd}$ of the generators is equal to $\Phi_n(q)$.

In the case $p {\:|\:} n$, we again use Lemma \ref{lemma_gcd}.
We only need to count the multiplicities of $\Phi_m(q)$ where $p {\not|\:} m$, because by \cite[p. 160]{nag} (assuming $p {\not|\:} m$) and the fact that we work over characteristic $p$, we have
$$\Phi_{pm}(q) = \dfrac{\Phi_{m}(q^p)}{\Phi_{m}(q)} = \Phi_{m}(q)^{p-1}.$$
Moreover, when $p_k = p$
$$[n]_q\big/[n/p_k]_q = (q^n-1)\big/(q^{n/p}-1) = (q^{n/p}-1)^{p-1} = \prod_{d{|}n/p} \Phi_d(q)^{p-1}$$
and when $p_k \neq p$
$$[n]_q\big/[n/p_k]_q = (q^{n/p}-1)^p\big/(q^{n/p_kp}-1)^p = 
%\left([n/p]_q\big/[n/p_kp]_q\right)^p = 
\prod_{\substack{d{|}n/p,\\p_k{|}d}} \Phi_d(q)^p.$$

The multiplicity of $\Phi_{n/p}(q)$ in $[n]_q\big/[n/p_k]_q$ is then equal to $p-1$ when $p_k = p$ and equal to $p$ otherwise.
Hence the minimum multiplicity over $1 \leq k \leq t$ is equal to $p-1$, which is equal to the multiplicity of $\Phi_{n/p}(q)$ in $\Phi_{n}(q)$.
The minimum multiplicity of $\Phi_{m}(q)$, where $m \neq n/p$ and $p {\not|\:} m$ is equal to $0$ for the same reason as before.
\end{proof}

\section{About tensored \texorpdfstring{$N$}{N}-complexes}

To define a tensor product for $N$-complexes, Kapranov \cite{kapr91} uses $q$-com\-muta\-ti\-vi\-ty in the construction of the total complex.
Assuming that $q$ is a primitive $N$:th root of unity (that is $\Phi_N(q) = 0$) implies that $\binom{N}{i}_q = 0$ for $0 < i < N$.
This turns the total complex into an $N$-complex. See \cite[Prop. 1.8--1.10]{kapr91} for details.

The following theorem shows that the assumption that $\Phi_N(q) = 0$ is not only sufficient in the above construction, but also necessary.

\begin{theorem}\label{th_principal}
In the ring ${\mathbb Z}[q]$, we have the following equality of ideals
$$
\left(\left(\kern-1ex\begin{array}{c}n\\1\end{array}\kern-1ex\right)_q\right) + 
\left(\left(\kern-1ex\begin{array}{c}n\\2\end{array}\kern-1ex\right)_q\right) + 
\cdots + 
\left(\left(\kern-1ex\begin{array}{c}n\\n-1\end{array}\kern-1ex\right)_q\right) 
= \big( \Phi_n(q) \big)
$$
where $\binom{n}{i}_q$ denote $q$-binomial coefficients and $\Phi_n$ denotes the $n$:th cyclotomic polynomial.
\end{theorem}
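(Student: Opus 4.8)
The plan is to follow the template of the preceding theorem. First I would record the cyclotomic factorization of the $q$-binomial coefficients: since $[m]_q=\prod_{e\mid m,\,e>1}\Phi_e(q)$, the multiplicity of $\Phi_e(q)$ in $[m]_q!=\prod_{s=1}^m[s]_q$ is $\lfloor m/e\rfloor$, so the multiplicity of $\Phi_e(q)$ in $\binom{n}{i}_q$ equals $\mu_e(n,i):=\lfloor n/e\rfloor-\lfloor i/e\rfloor-\lfloor(n-i)/e\rfloor$. A standard floor-function estimate gives $\mu_e(n,i)\in\{0,1\}$, with value $1$ exactly when $(i\bmod e)>(n\bmod e)$. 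Taking $e=n$ yields $\mu_n(n,i)=1$ for $1\le i\le n-1$, so $\Phi_n(q)$ divides every generator, and it suffices to show that the polynomials $\binom{n}{i}_q/\Phi_n(q)$ generate the unit ideal. By Lemma \ref{lemma_ekedahl_method} this can be tested in $\mathbb{Z}/(p)[q]$ for each prime $p$, and since that ring is a PID it reduces to the single identity $\gcd_{1\le i\le n-1}\binom{n}{i}_q=\Phi_n(q)$ there.

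Next I would reduce the gcd to one cyclotomic factor at a time. Writing $n=p^a n'$ with $p\nmid n'$, iterating the relation $\Phi_{pm}(q)\equiv\Phi_m(q)^{p-1}\pmod p$ used above gives $\Phi_{p^{j}m}(q)\equiv\Phi_m(q)^{\varphi(p^{j})}\pmod p$, hence $\Phi_n(q)\equiv\Phi_{n'}(q)^{\varphi(p^a)}$, and it groups all honest cyclotomic factors according to their prime-to-$p$ part. By Lemma \ref{lemma_gcd} the polynomials $\Phi_d(q)$ for distinct $d$ coprime to $p$ are pairwise coprime mod $p$ (and separable, by the same splitting-field argument), so I can work block by block: for each $d$ coprime to $p$ the multiplicity of $\Phi_d(q)$ in $\binom{n}{i}_q$ mod $p$ is $V_d(n,i):=\sum_{j\ge0}\varphi(p^{j})\,\mu_{p^{j}d}(n,i)$, and the theorem amounts to showing that $\min_{1\le i\le n-1}V_d(n,i)$ equals $\varphi(p^a)$ when $d=n'$ and equals $0$ otherwise.

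The two minimizations would then be made explicit. For $d\ne n'$ coprime to $p$ (and $d\le n$, the case $d>n$ being trivial), let $J$ be the largest exponent with $p^{J}d\le n$; since $d\ne n'$, comparing prime-to-$p$ parts shows $p^{J}d\ne n$, so $i=p^{J}d$ lies in $[1,n-1]$. Because $p^{j}d\mid i$ for $j\le J$ while $p^{j}d>n>i$ for $j>J$, every term $\mu_{p^{j}d}(n,i)$ vanishes, giving $V_d(n,i)=0$. For $d=n'$ the divisors $p^{j}n'$ with $j\le a$ all divide $n$, so $\mu_{p^{j}n'}(n,i)=[\,p^{j}n'\nmid i\,]$; the top term $j=a$ is forced to equal $1$ for every $i\in[1,n-1]$ (as $p^{a}n'=n$), whence $V_{n'}(n,i)\ge\varphi(p^a)$, while $i=n/p=p^{a-1}n'$ (or, when $a=0$, any $i$) kills all lower terms and attains $\varphi(p^a)$. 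Matching these minima against the multiplicities of $\Phi_d(q)$ in $\Phi_n(q)\equiv\Phi_{n'}(q)^{\varphi(p^a)}$ closes the argument.

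I expect the one genuinely delicate point to be this last minimization. A priori the minimum of the weighted sum $V_d(n,i)$ need not be the sum of the minima of its terms, so the \emph{main obstacle} is to produce a single index $i$ that is simultaneously optimal for all $j$. What rescues the computation is the monotonicity of the weights $\varphi(p^{j})$ together with the nested divisibility of the moduli $p^{j}d$, which makes the explicit minimizers $i=p^{J}d$ and $i=n/p$ available; checking that these lie in $[1,n-1]$ and that the forced top term carries exactly the weight $\varphi(p^a)$ of $\Phi_{n'}(q)$ in $\Phi_n(q)$ is precisely where the two cases $p\nmid n$ and $p\mid n$ of the preceding proof get subsumed into the single parameter $a$.
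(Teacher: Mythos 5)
Your proposal is correct and follows essentially the same route as the paper's (Ekedahl's) proof: reduce to the unit‑ideal statement, pass to $\mathbb{Z}/(p)[q]$ via Lemmas \ref{lemma_maxprime}--\ref{lemma_ekedahl_method}, and compute the gcd in that PID one prime‑to‑$p$ cyclotomic block $\Phi_d$ at a time, exhibiting essentially the same witness indices $i=p^Jd$ and $i=n/p$. The only real difference is cosmetic: you count multiplicities via the floor formula $\lfloor n/e\rfloor-\lfloor i/e\rfloor-\lfloor (n-i)/e\rfloor\in\{0,1\}$ instead of cancelling the factors $q^{s}-1$ in numerator and denominator directly, which makes the bookkeeping (and the treatment of $d\nmid n$) a bit cleaner but is the same computation.
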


\begin{proof}[Proof (T. Ekedahl)] 
The claim is that the ideal in ${\mathbb Z}[q]$ generated by $\binom{n}{i}_q, 0 < i < n$, is equal to $\Phi_n(q)$, the n'th
cyclotomic polynomial (all $\binom{n}{i}_q$ are clearly divisible by $\Phi_n(q)$ as the factor $\Phi_n(q)$ in $q^n-1$ appearing in the numerator doesn't cancel from the denominator). Hence an equivalent formulation
is that the ideal $I$ generated by $\binom{n}{i}_q/\Phi_n(q)$ is equal to the unit ideal. If not it is contained 
in a maximal ideal and any maximal ideal of ${\mathbb Z}[q]$ contains a prime number $p$. Hence we may
replace ${\mathbb Z}[q]$ by ${\mathbb Z}/p[q]$. As the latter ring is a PID, what we need to show is that the GCD of
the $\binom{n}{i}_q$ is equal to $\Phi_n(q)$. Now we recall that $q^m-1 = \prod_{d|m} \Phi_d(q)$ and if $m = p^km'$ we have
$\Phi_m = \Phi_{m'}(q)^{p^k}$ (everything computed in ${\mathbb Z}/p[q]$). Finally, if $p {\not|\:} m, m'$, then $\Phi_m(q)$ and $\Phi_{m'}(q)$ 
are relatively prime. Hence, for $p {\not|\:} d$ we have that the multiplicity with which $\Phi_d$ divides $q^m-1$
is equal to $1$ if $d {\not|\:} m$ and equal to $\psi(k) := p^k+p^{k-1}+\cdots+1$ if $d | m$ and $k$ is the largest power
of $p$ dividing $m$.

To show the result it is enough to show that for every $d$ with $p {\not|\:} d$ the largest power of $\Phi_d(q)$
which divides all $\binom{n}{i}_q$ is $1$ if $n\neq p^kd$ and $p^k$ if $n=p^kd$. Assume therefore that $p{\not|\:}d$. Applied to
$i=1$ this gives $d|n$ and we write $n=p^km$ with $p{\not|\:}m$. Assume first that $d \neq m$ and consider 
$$\left(\kern-1ex\begin{array}{c}n\\p^kd\end{array}\kern-1ex\right)_q 
= \frac{(q^n-1)(q^{n-1}-1)\cdots(q^{n-p^kd+1})}{(q-1)(q^2-1)\cdots(q^{p^kd}-1)}$$
Now, the multiplicity with which $\Phi_d(q)$ divides $q^{p^kd-j}-1$, for $0 \leq j < p^kd$ is equal to the same 
multiplicity for $q^{p^km-j}-1$ and hence the $\Phi_d(q)$-factors in the numerator and denominator cancel 
exactly. If instead $n=p^kd$, then we use $\binom{n}{p^{k-1}d}_q$ and the argument is the same except that we 
get an extra contribution of multiplicity $p^k$ in $q^n - 1$.
\end{proof}

Note that for a prime $p$ we have, by \cite[p. 160]{nag}
\begin{equation*}
\Phi_{np}(q) = 
\begin{cases}
  \Phi_n(q^p) & \quad \text{if $p {\:|\:} n$,}\\ \\
  \dfrac{\Phi_n(q^p)}{\Phi_n(q)} & \quad \text{if $p {\not|\:} n$.}
\end{cases}
\end{equation*}
So in characteristic $p$ we get $\Phi_{np^k}(q) = \Phi_n(q^{p^k})/\Phi_n(q^{p^{k-1}}) = \Phi_{n}(q)^{(p-1)p^{k-1}}$ if $p {\not|\:} n$.

\end{document}